\definecolor{webgreen}{rgb}{0,.5,0}
\definecolor{webbrown}{rgb}{.8,0,0}
\definecolor{emphcolor}{rgb}{0.95,0.95,0.95}
\ifpdf \hypersetup{pdftex,
            pdfstartview=FitH, 
            bookmarksopen=true,
            bookmarksnumbered=true
} \else \hypersetup{dvips} \fi
\numberwithin{equation}{section}
\newtheorem{theorem}{Theorem}[section]
\newtheorem{proposition}{Proposition}[section]
\newtheorem{remark}{Remark}[section]
\newtheorem{lemma}{Lemma}[section]
\newtheorem{assump}{Assumption}[section]
\numberwithin{remark}{section} \numberwithin{proposition}{section}
\numberwithin{corollary}{section}
\newcommand {\R}{\mathbb{R}}
\newcommand {\p}{\mathbb{P}}
\newcommand {\E}{\mathbb{E}}
\newcommand{\diff}{{\rm d}}
\newcommand{\lev}{L\'{e}vy }
\newcommand{\e}{\mathbb{E}}
\title{American options under periodic exercise opportunities}
\begin{document}

\begin{abstract} In this paper, we study a version of the perpetual American call/put
option where exercise opportunities arrive only periodically. Focusing
on the exponential \lev models with i.i.d.\ exponentially-distributed exercise
intervals, we show the optimality of a barrier strategy that exercises
at the first exercise opportunity at which the asset price is above/below a
given barrier.  Explicit solutions are obtained for the cases where the
underlying \lev process has only one-sided jumps. \\
\noindent \small{\noindent  AMS 2010 Subject Classifications: 60G40, 60J75, 91G80 \\ 
\textbf{Key words:} American options; optimal stopping; \lev processes; periodic exercise opportunities.}
\end{abstract}

	\thanks{This version: \today.  J. L. P\'erez  is  supported  by  CONACYT,  project  no.\ 241195.
K. Yamazaki is in part supported by MEXT KAKENHI grant no.\  17K05377.}
\author[J. L. P\'erez]{Jos\'e-Luis P\'erez$^*$}
\thanks{$*$\, Department of Probability and Statistics, Centro de Investigaci\'on en Matem\'aticas A.C. Calle Jalisco s/n. C.P. 36240, Guanajuato, Mexico. Email: jluis.garmendia@cimat.mx.  }
\author[K. Yamazaki]{Kazutoshi Yamazaki$^\dag$}
\thanks{$\dag$\, Department of Mathematics,
Faculty of Engineering Science, Kansai University, 3-3-35 Yamate-cho, Suita-shi, Osaka 564-8680, Japan. Email: kyamazak@kansai-u.ac.jp.   }
\date{}

	\maketitle
\section{Introduction} 
The valuation of the perpetual American put/call options has been considered in many papers.  This can be used as an approximation to the finite maturity case, and also used in various real option models (see, e.g., Dixit and Pindyck \cite{DP}). Unlike the finite maturity case that commonly requires numerical approaches, the perpetual case typically admits a simple analytical solution.
In particular, when the underlying process is an exponential \lev process, the optimal stopping time is known to be of barrier-type: it is optimal to exercise as soon as the process goes above or below a certain barrier, which can be written concisely using the Wiener-Hopf factors (see Remark \ref{remark_classical_case}).
We refer the readers to the seminal papers by Mordecki \cite{M} and Alili and Kyprianou \cite{AK}, in this context.

In this paper, we consider a variant where exercise opportunities arrive only periodically.  While most of the continuous-time models assume that one can exercise the option instantaneously at any time, in reality one can monitor the underlying process only at intervals. Motivated by this, we consider the case in which one can exercise only at the jump times of an independent Poisson process.  This can be seen as a modification of Bermudan options where the constant exercise intervals are replaced by i.i.d.\ exponential random variables. 


We consider both put- and call-type payoffs.  The objective of this paper is twofold.

First, for a general underlying \lev process, we show the optimality of the \emph{periodic barrier strategy} that exercises at the first exercise opportunity at which the underlying process is below/above a suitably chosen barrier. 


Second, we focus on the spectrally one-sided \lev case (where jumps are one-sided) and obtain the optimal solution explicitly using the scale function. The expected value under a periodic barrier strategy can be directly computed using the results in Albrecher et al.\  \cite{AIZ}.  We obtain the optimal barriers and derive explicit forms of the optimal value function.  


In order to confirm the analytical results, we also give numerical results using spectrally one-sided \lev processes consisting of a Brownian motion with  i.i.d.\ exponential-size jumps.  We confirm 
the optimality, and also study the behavior of the optimal solutions with respect to the rate of Poisson arrivals. 


This paper is motivated by recent developments on  the optimal dividend problem where one wants to maximize the total discounted  dividends until ruin, with an extra restriction that the dividend payment opportunities arrive only periodically. 
It has recently been shown, for the case of exponential interarrival times, that a periodic barrier strategy is optimal when the underlying process is a spectrally one-sided \lev process (see \cite{ATW2014, NPYY2017, YP2016}). This current paper can be seen as its optimal stopping version. Other related research includes Parisian ruin/reflection (see, e.g., \cite{APY, PYM}); when the (Parisian) delays are exponential random variables, many fluctuation identities can be written semi-analytically in terms of the scale function, similarly to what we discuss in this paper for the case of a spectrally negative \lev process.

The rest of the paper is organized as follows.  Section \ref{section_preliminary} gives a mathematical formulation of the problem. Section \ref{section_optimality_barrier} shows, for a general \lev case, the optimality of a periodic barrier strategy.  We then obtain optimal solutions explicitly for the spectrally one-sided case in Section \ref{section_spectrally_one_sided}.  We conclude with numerical results in Section \ref{section_numerics}.

\section{Our Problem} \label{section_preliminary}


Let $X$ be a \lev process and  $S = \exp X$ the price of a stock.  For $s>0$, 
we denote by $\p_s$ the law of $S$ when it starts at $s$ ($X_0 = \log s$) and write for convenience  $\p$ in place of $\p_1$. Accordingly, we shall write $\e_s$ and $\e$ for the associated expectation operators. We define $\mathcal{T} := \{T_1, T_2, \ldots \}$ as the jump times of an independent Poisson process $N$ with rate $\lambda > 0$.  Let $\mathbb{F}$ be the filtration generated by  $(X,N)$ and $\mathbb{T}$ the set of  $\mathbb{F}$-stopping times.  The set of strategies is  given by $\mathcal{T}\cup \{\infty\}$-valued stopping times:
\begin{align*}
\mathcal{A} := \{ \tau \in \mathbb{T}: \tau \in \mathcal{T}\cup \{\infty\} \; a.s. \}.
\end{align*}


We consider American-type put/call options:
\begin{align}
V_i(s) = \sup_{\tau \in \mathcal{A}} \E_s [e^{-r \tau} G_i(S_\tau) 1_{\{ \tau < \infty \}}
], \quad i = p, c, \label{value_function} \end{align} 
for
\begin{align*}
G_p(s) := (K-s)^+ \quad \textrm{and} \quad G_c(s) := (s-K)^+,
\end{align*}
for a given discount factor $r > 0$ and strike price $K > 0$. 


In order to obtain a nontrivial solution, we assume the following for the call option.
\begin{assump} \label{assump_call}
For the call option $V_c$, we assume that $\E S_1 < e^{r}$. 
\end{assump}

\begin{remark} [Classical case]  \label{remark_classical_case}The classical case with the set of admissible strategies $\mathcal{A}$ replaced by $\mathbb{T}$ has been solved by  \cite{M, AK}. 
(i) In the classical put case, it is optimal to stop as soon as $S$ goes below the level 
\begin{align*}
A^*_{p, \infty}:= K \E [\exp (\underline{X}_{\mathbf{e}_{r}})],
\end{align*}
where $\underline{X}$ is the running infimum process of $X$ and $\mathbf{e}_{{r}}$ is an independent exponential random variable with parameter ${r}$.
(ii) In the classical call case, under Assumption \ref{assump_call}, it is optimal to stop as soon as $S$ goes above 
\begin{align*}
B^*_{c, \infty} := K \E [\exp (\overline{X}_{\mathbf{e}_{{r}}})],
\end{align*}
where $\overline{X}$ is the running supremum process of $X$.
\end{remark}

\section{Optimality of periodic barrier strategies} \label{section_optimality_barrier}


In this section, we show that the optimal stopping times  are of the form
\begin{align} \label{barrier_strategies}
\tau_{A}^- := \inf \{ T \in \mathcal{T}: S_T \leq A \} \quad \textrm{and} \quad \tau_{B}^+ := \inf \{ T \in \mathcal{T}: S_T \geq B \}
\end{align}
for suitably chosen barriers $A$ and $B$. Here and throughout, we let $\inf \varnothing = \infty$. 

Let $\bar{\mathcal{A}} := \{ \tau \in \mathbb{T}: \tau \in \bar{\mathcal{T}} \cup \{\infty\} \; a.s. \}$
with $\bar{\mathcal{T}} := \mathcal{T} \cup \{0\}$.
Define the value function of an auxiliary problem where immediate stopping is also allowed:
\begin{align} \label{extended_problem}
\bar{V}_i(s) = \sup_{\tau \in \bar{\mathcal{A}}} \E_s [e^{-{r} \tau} G_i(S_\tau) 1_{\{ \tau < \infty \}}], \quad i = p, c, \quad s > 0. \end{align}
By the strong Markov property,
\begin{align*}
V_i(s) = \E_s [e^{- {r} T_1} \bar{V}_i(S_{T_1})], \quad i = p, c, \quad s > 0.
\end{align*} 
If \eqref{extended_problem} is solved by 
\begin{align*} 
\bar{\tau}_{A}^- := \inf \{ T \in \bar{\mathcal{T}}: S_T \leq A \} \quad \textrm{and} \quad \bar{\tau}_{B}^+ := \inf \{ T \in \bar{\mathcal{T}}: S_T \geq B \},
\end{align*}
it is clear that \eqref{value_function}  is solved by \eqref{barrier_strategies} for the same values of $A$ and $B$. Hence, we shall analyze $\bar{V}_i$ below.

	Define the value function of an auxiliary finite-maturity problem:
\begin{align*}
\bar{V}_{i,n}^N(s) &:= \sup_{\tau \in \mathcal{A}_{n,N}} \E [e^{-{r} (\tau- T_n)} G_i(S_\tau) 
| S_{T_n} = s
], \quad 0 \leq n \leq N, \quad i = p,c, \quad s > 0,
\end{align*} 	
where 
\begin{align*}
\mathcal{A}_{n,N} &:= \{ \tau \in \bar{\mathcal{A}}: T_n \leq \tau \leq T_N  \, a.s. \}, \quad 0 \leq n \leq N,
\end{align*}
with $T_0 := 0$.
This is the expected value on condition that $S_{T_n} =s $ and the controller has not stopped before $T_n$.

Similarly, we define
\begin{align*}
\bar{V}_{i,n}(s) &:= \sup_{\tau \in \mathcal{A}_n} \E [e^{-{r} (\tau- T_n)} G_i(S_\tau) 
1_{\{\tau < \infty \}}
| S_{T_n} = s ],  \quad i = p,c, \quad s > 0,
\end{align*}
where 
\begin{align*}
	\mathcal{A}_n &:= \{ \tau \in \bar{\mathcal{A}}: \tau \geq T_n \, a.s. \}, \quad n \geq 0.
\end{align*}
It is clear that, for all $n \geq 0$ and $s > 0$,
\begin{align}
\bar{V}_{i,n}^N(s) = \bar{V}_{i,0}^{N-n}(s),  \; N \geq n, \quad \textrm{and} \quad \bar{V}_{i,n}(s) = \bar{V}_i(s). \label{v_n_same}
\end{align}
%
For the call case, by Assumption \ref{assump_call}, with $\alpha := \log \E S_1 < {r}$,
\begin{align*}
	0\leq \bar{V}_{c}(s) - \bar{V}_{c,0}^N (s) \leq e^s \sum_{n=N+1}^\infty \E [e^{-{r} T_n} S_{T_n}] =  e^s \sum_{n=N+1}^\infty \E [\E [e^{-{r} T_n} S_{T_n} |T_n] ] =  e^s \sum_{n=N+1}^\infty \E [ e^{-({r}-\alpha) T_n}]  \\= e^s \sum_{n=N+1}^\infty (\E [ e^{-({r}-\alpha) T_1}])^n =   e^s \sum_{n=N+1}^\infty \Big( \frac {\lambda} {\lambda+{r}-\alpha}\Big)^n.
\end{align*}
Hence, for $s > 0$, 
\begin{align}
\label{v_N_conv}
\begin{split}
&0 \leq \bar{V}_{p}(s) - \bar{V}_{p,0}^N(s) \leq \E_{s} [\sup_{t \geq T_N} e^{-{r}t} K] \xrightarrow{N \uparrow \infty} 0, \\
&0 \leq \bar{V}_{c}(s) - \bar{V}_{c,0}^N (s) \leq e^s \sum_{n=N+1}^\infty \Big( \frac {\lambda} {\lambda+{r}-\alpha}\Big)^n  \xrightarrow{N \uparrow \infty} 0.
\end{split}
\end{align}


By backward induction (similarly to the case of discrete-time stochastic dynamic programming), we can show the following.

\begin{proposition} \label{proposition_convexity}The mappings $s \mapsto \bar{V}_p(s)$ and $s \mapsto \bar{V}_c(s)$ are convex on $(0, \infty)$.
\end{proposition}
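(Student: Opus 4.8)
The plan is to establish convexity first for the finite-horizon value functions $\bar V_{i,0}^N$ and then pass to the limit $N\uparrow\infty$ via \eqref{v_N_conv}. Writing $\Psi_i^m := \bar V_{i,0}^m$ for brevity, the strong Markov property together with the i.i.d.\ exponential inter-arrival times of $N$ and the homogeneity relation \eqref{v_n_same} yields the backward dynamic programming recursion
\begin{align*}
\Psi_i^0(s) = G_i(s), \qquad \Psi_i^m(s) = \max\Big\{ G_i(s),\ \E_s\big[ e^{-{r} T_1}\, \Psi_i^{m-1}(S_{T_1}) \big] \Big\}, \quad m \geq 1,
\end{align*}
exactly as in discrete-time optimal stopping: at an exercise epoch one either collects the payoff $G_i$ or waits one interval, whose continuation value is the discounted expectation of the next-stage value. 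I would first record this recursion carefully---this is where the restriction to $\bar{\mathcal{T}}$ and the terminal epoch $T_N$ enter---and then prove by induction on $m$ that each $\Psi_i^m$ is convex.

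For the base case, both $G_p(s)=(K-s)^+$ and $G_c(s)=(s-K)^+$ are convex. For the inductive step I would isolate the continuation operator $f \mapsto \E_s[e^{-{r}T_1}f(S_{T_1})]$ and show that it sends convex functions to convex functions; granting this, $\Psi_i^m$ is the pointwise maximum of the convex $G_i$ and a convex continuation value, hence convex. The crux is the multiplicative structure of the exponential \lev model: under $\p_s$ one has $X_0=\log s$, so $S_{T_1}=s\,e^{X_{T_1}-X_0}$, and the joint law of $(T_1,\, X_{T_1}-X_0)$ does not depend on $s$. Consequently
\begin{align*}
\E_s\big[ e^{-{r}T_1} f(S_{T_1})\big] = \int e^{-{r}t}\, f\big(s\,e^{y}\big)\, \mu(\ud t,\ud y),
\end{align*}
where $\mu$ is this $s$-independent joint law. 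For each fixed $(t,y)$ the map $s\mapsto e^{-{r}t} f(s e^{y})$ is convex, being a nonnegative multiple of $f$ precomposed with the linear map $s\mapsto s e^{y}$; averaging such a family against $\mu$ preserves convexity, which gives the claim.

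Finally, since convexity is stable under pointwise limits, letting $N\uparrow\infty$ in the convergence $\bar V_{i,0}^N \to \bar V_i$ recorded in \eqref{v_N_conv} transfers convexity to $\bar V_p$ and $\bar V_c$. I expect the main obstacle to lie not in the convexity-preservation step, which is clean, but in the bookkeeping that makes the continuation operator well defined and the recursion legitimate: one must ensure that $\E_s[e^{-{r}T_1}\Psi_i^{m-1}(S_{T_1})]$ is finite at every stage. For the put this is immediate from $0\le \Psi_p^m \le K$, whereas for the call it rests on Assumption \ref{assump_call}, which is precisely what the estimate preceding \eqref{v_N_conv} exploits through $\alpha=\log \E S_1 < {r}$. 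I would therefore carry the uniform bounds $\Psi_p^m\le K$ and $\Psi_c^m(s)\le \bar V_c(s)<\infty$ through the induction so that every expectation in sight remains finite.
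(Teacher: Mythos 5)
Your proposal is correct and follows essentially the same route as the paper: backward induction on the finite-horizon dynamic programming recursion, convexity of the continuation operator via the $s$-independence of the law of $(T_1, X_{T_1}-X_0)$ and the convexity-preserving composition $s\mapsto f(se^y)$, and passage to the limit $N\uparrow\infty$ using \eqref{v_N_conv}. Your added bookkeeping on finiteness of the continuation value (the bound $\Psi_p^m\le K$ and Assumption \ref{assump_call} for the call) is a minor refinement of a point the paper leaves implicit.
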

\begin{proof}
We focus on the put case; the proof for the call case is similar. First, we have
$\bar{V}_{p,N}^N(s) = (K- s)^+$,
which is convex.
By the dynamic programming principle (see, e.g., (1.1.53) of Peskir and Shiryaev \cite{PS}), we have
\begin{align}
\bar{V}_{p,n}^N(s) = \max \big((K- s)^+, \E_s [e^{-{r} T_1} \bar{V}_{p,n+1}^N(S_{T_1})] \big), \quad 0 \leq n \leq N-1. \label{dp_principle}
\end{align}
Suppose $\bar{V}_{p,n+1}^N$ is convex. Then,
\begin{align*}
\E_s [e^{-{r} T_1} \bar{V}_{p,n+1}^N(S_{T_1})] &= \lambda \int_0^\infty e^{-\lambda t} \E [e^{-{r} t} \bar{V}_{p,n+1}^N(s \exp(X_{t}))] \diff t = \lambda \int_0^\infty e^{-(\lambda +{r}) t} \int_{-\infty}^\infty \bar{V}_{p,n+1}^N(s e^{x}) \p (X_t \in \diff x ) \diff t ,
\end{align*}
is also convex in $s$.  Hence, $\bar{V}_{p,n}^N$ is also convex by \eqref{dp_principle}.
By induction, $\bar{V}_{p,0}^N$ is convex.  Finally, by the convergence \eqref{v_N_conv}, $\bar{V}_p$ is convex as well. 


\end{proof}

\begin{theorem} \label{theorem_barrier_optimal} There exist $K > A^*_p \geq A^*_{p, \infty} $ and $K < B^*_c \leq B^*_{c, \infty}$ such that $\tau_{A^*_p}^-$ and  $\tau_{B^*_c}^+$ solve \eqref{value_function}  for $i=p$ and $i=c$, respectively.
\end{theorem}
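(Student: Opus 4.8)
The plan is to prove everything for the auxiliary value functions $\bar V_i$ of \eqref{extended_problem}; by the reduction recorded just after \eqref{extended_problem}, a barrier solving the auxiliary problem yields the same barrier solving \eqref{value_function}. First I would let $N\uparrow\infty$ in the dynamic programming recursion \eqref{dp_principle}, using \eqref{v_n_same} and the uniform convergence \eqref{v_N_conv}, to obtain the fixed-point equation
\begin{align*}
\bar V_i(s) = \max\big(G_i(s),\, \E_s[e^{-r T_1}\bar V_i(S_{T_1})]\big), \quad i=p,c, \quad s>0.
\end{align*}
This decomposes $(0,\infty)$ into the stopping region $\mathcal{S}_i := \{s>0: \bar V_i(s)=G_i(s)\}$ and its complement, and reduces the theorem to identifying $\mathcal{S}_p=(0,A^*_p]$ and $\mathcal{S}_c=[B^*_c,\infty)$ with the stated bounds, after which the first entry times into these sets are the claimed optimal strategies.

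I treat the put; the call is symmetric. The structural input is the convexity of $\bar V_p$ from Proposition \ref{proposition_convexity}. On $(0,K]$ the function $h(s):=\bar V_p(s)-(K-s)^+=\bar V_p(s)+s-K$ is convex (convex plus affine) and nonnegative, so $\mathcal{S}_p\cap(0,K]=\{h=0\}$ is a subinterval of $(0,K]$; on $[K,\infty)$ one has $h=\bar V_p$, which is strictly positive because from any starting point the price lies below $K$ at the first exercise opportunity with positive probability, whence $\E_s[e^{-r T_1}\bar V_p(S_{T_1})]>0$. Therefore $\mathcal{S}_p\subseteq(0,K)$ and is a subinterval; once I show it contains a right-neighborhood of $0$ it must be of the form $(0,A^*_p]$ with $A^*_p<K$.

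To place $A^*_p$ relative to the classical barrier I would compare with the unrestricted problem of Remark \ref{remark_classical_case}, whose value I denote $V_{p,\infty}$. Since $\bar{\mathcal{A}}\subseteq\mathbb{T}$ we have $\bar V_p\leq V_{p,\infty}$, and since $\tau=0\in\bar{\mathcal{A}}$, for every $s\leq A^*_{p,\infty}$—where immediate stopping is optimal over all of $\mathbb{T}$ and $V_{p,\infty}(s)=K-s$—we obtain $K-s\leq\bar V_p(s)\leq V_{p,\infty}(s)=K-s$, hence $\bar V_p(s)=(K-s)^+$. Thus $(0,A^*_{p,\infty}]\subseteq\mathcal{S}_p$, which supplies the missing neighborhood of $0$ and yields $\mathcal{S}_p=(0,A^*_p]$ with $A^*_{p,\infty}\leq A^*_p<K$. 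The call case is identical in form: the inclusion $[B^*_{c,\infty},\infty)\subseteq\mathcal{S}_c$ gives $B^*_c\leq B^*_{c,\infty}$, the strict inequality $B^*_c>K$ follows from strict positivity of the continuation value at $s=K$, and Assumption \ref{assump_call} ensures that all the quantities above are finite.

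It remains to verify that the first entry times $\bar\tau^-_{A^*_p}$ and $\bar\tau^+_{B^*_c}$ attain the suprema in \eqref{extended_problem}. Since $\mathcal{S}_p=\{\bar V_p=G_p\}$ is the down-crossing set $(0,A^*_p]$, this is precisely the statement that first entry into the stopping region is optimal, which I would draw from the general theory of optimal stopping (cf.\ \cite{PS}), with the finite-maturity approximation \eqref{v_N_conv} furnishing the passage to the infinite-horizon problem. I expect this final step to be the main obstacle: one must justify the fixed-point equation as a genuine limit of \eqref{dp_principle} and exclude any loss of optimality as $N\uparrow\infty$. For the put this is controlled by the bound $G_p\leq K$ together with discounting, and for the call by Assumption \ref{assump_call}; by comparison, the convexity argument and the comparison with the classical problem are routine.
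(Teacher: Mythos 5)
Your proposal is correct and follows essentially the same route as the paper's proof: reduce to the auxiliary problem $\bar V_i$, use convexity (Proposition \ref{proposition_convexity}) together with the comparison $\bar V_p\le V_{p,\infty}$ to show the stopping region is an interval containing $(0,A^*_{p,\infty}]$, use strict positivity of $\bar V_p$ to get $A^*_p<K$, and invoke the dynamic programming principle from Peskir--Shiryaev to conclude that the first entry time into the stopping region is optimal. The only cosmetic difference is that you derive the fixed-point equation as the $N\uparrow\infty$ limit of \eqref{dp_principle} via \eqref{v_N_conv}, whereas the paper cites the infinite-horizon DPP directly.
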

\begin{proof}
We only prove for the put case; the proof for the call case is similar. 
We consider the auxiliary problem  \eqref{extended_problem}. Immediate stopping gives $(K-s)^+$ and hence, we must have $\bar{V}_p(s) \geq (K-s)^+$.

Suppose $V_{p, \infty}$ is the value function in the classical case as in \cite{M, AK}. 
Then because $\mathcal{A} \subset \mathbb{T}$, we must have $\bar{V}_{p}(s) \leq V_{p, \infty} (s)$ for all $s > 0$.  In particular, because for $s < A^*_{p, \infty} < K$ (see Remark \ref{assump_call}), we have $V_{p, \infty}(s) = (K-s)^+ =K-s$ and hence we must have $\bar{V}_{p}(s) = (K-s)^+ = K-s$ as well.  By this and the convexity as in Proposition \ref{proposition_convexity}, we have $\{ s > 0: \bar{V}_{p}(s) = K-s\} = (0, A^*_p ]$ for some $A^*_p \in [A^*_{p, \infty}, \infty]$. In order to show that $A^*_p < K$ and $\{ s > 0: \bar{V}_{p}(s) = (K-s)^+ \} = (0, A^*_p ]$, it suffices to show $\bar{V}_p(s) > 0$ for all $s$.  Indeed, this holds because the payoff function is nonnegative and $S$ can reach any level below $K$ with a positive probability.  
Now, by the dynamic programming principle (see, e.g.\ Theorem 1.11 of Peskir and Shiryaev \cite{PS}), we have 
\begin{align*}
\bar{V}_{p}(s) = \max \big((K- s)^+, \E_s [e^{-{r} T_1} \bar{V}_{p}(S_{T_1})] \big), \quad s > 0,
\end{align*}
and $\inf \{ T \in \bar{\mathcal{T}} : \bar{V}_{p}(S_{T}) = (K-S_{T})^+ \} = \inf \{ T \in \bar{\mathcal{T}} : S_{T} \leq A^*_p \} = \bar{\tau}^-_{A^*_p}$ is the optimal strategy for the problem \eqref{extended_problem}.
Hence $\tau^-_{A^*_p}$ is optimal for \eqref{value_function}.


\end{proof}

\section{Spectrally one-sided case}  \label{section_spectrally_one_sided}


By Theorem \ref{theorem_barrier_optimal}, solving \eqref{value_function}  reduces to finding the best periodic barrier.  Here, we explicitly obtain it for the case of spectrally negative/positive \lev processes (that do not have monotone paths a.s.). We refer to Section 8 of Kyprianou \cite{K} for a comprehensive review.

Let $\tilde{\p}_x = \p_{\exp(x)}$ be the probability measure under which $S_0 = e^x$. Also define the stopping times
\begin{align} \label{crossing_time_X}
\begin{split}
\tilde{\tau}_{a}^- &:= \tau_{\exp(a)}^- = \inf \{ T \in \mathcal{T}: X_{T} \leq a \} \quad \textrm{and} \quad \tilde{\tau}_{b}^+ := \tau_{\exp(b)}^+ = \inf \{ T \in \mathcal{T}: X_{T} \geq b \}.
\end{split}
\end{align}
\subsection{Spectrally negative \lev case} Suppose $X$ is a spectrally negative \lev process with Laplace exponent:
\[
\psi(\theta) := \log \e\big[S_1^{\theta}\big] = \log \e\big[{\rm e}^{\theta X_1}\big], \qquad \theta\ge 0,
\]
with its right-inverse 
		\begin{align}
			\begin{split}
				\Phi(p) := \sup \{ s \geq 0: \psi(s) = p\}, \quad p > 0. 
			\end{split}
			\label{def_varphi}
		\end{align}

We use $W^{({r})}$ for the $r$-scale function of $X$.  This is the mapping from $\R$ to $[0, \infty)$ that takes value zero on the negative half-line, while on the positive half-line it is a continuous and strictly increasing function defined by
		\begin{align} \label{scale_function_laplace}
			\begin{split}
				\int_0^\infty  \mathrm{e}^{-\theta x} W^{({r})}(x) \diff x &= \frac 1 {\psi(\theta)-r}, \quad \theta > \Phi({r}).
			\end{split}
		\end{align}
	 The $(r+\lambda)$-scale function $W^{(r+\lambda)}$ is defined analogously.
Define also
\begin{align*} 
Z^{({r})}(x, \theta ) &:=e^{\theta x} \left( 1 + ({r}- \psi(\theta )) \int_0^{x} e^{-\theta  z} W^{({r})}(z) \diff z	\right), \quad x \in \R, \, \theta  \geq 0.
\end{align*}
In particular, for $x \in \R$, we let $Z^{({r})}(x) =Z^{({r})}(x, 0)$ and, for $\lambda > 0$,
\begin{align*} 
\begin{split}
Z^{({r})}(x, \Phi({r}+\lambda)) &=e^{\Phi(r+\lambda) x} \left( 1 -\lambda \int_0^{x} e^{-\Phi(r+\lambda) z} W^{({r})}(z) \diff z \right),  \\
Z^{({r}+\lambda)}(x, \Phi({r})) &=e^{\Phi({r}) x} \left( 1 + \lambda \int_0^{x} e^{-\Phi({r}) z} W^{(r+\lambda)}(z) \diff z	\right).
\end{split}
\end{align*}

By equation (14) of Theorem 3.1 in \cite{AIZ}, for $\theta \geq 0$,
\begin{align} \label{laplace_downcrossing}
\begin{split}
&\tilde{\mathbb{E}}_x \Big[e^{-{r} \tilde{\tau}_{a}^-+\theta X_{\tilde{\tau}_{a}^-}} 1_{\{ \tilde{\tau}_{a}^- < \infty \}}\Big] =e^{\theta a}\tilde{\mathbb{E}}_{x-a}\left[e^{-{r}\tilde{\tau}_{0}^-+\theta X_{\tilde{\tau}_{0}^-}} 1_{\{ \tilde{\tau}_{0}^- < \infty \}} \right]\\
&=\frac{\lambda e^{\theta a}}{\lambda +{r}-\psi(\theta)}\left(Z^{({r})}(x-a,\theta)-Z^{({r})}(x-a,\Phi({r}+\lambda))\frac{\psi(\theta)-{r}}{\lambda}\frac{\Phi({r}+\lambda)-\Phi({r})}{\theta-\Phi({r})}\right),
\end{split}
\end{align}
where the cases $\lambda+{r} = \psi(\theta)$ and ${r} = \psi(\theta)$ are interpreted as the limiting cases.
\begin{lemma}[Extension of equation (16) of \cite{AIZ}] \label{lemma_upcrossing_time} 
For $x, b \in \R$ and $\theta \geq 0$, we have 
\begin{align*}
\tilde{\E}_x \Big[ e^{-{r} \tilde{\tau}_b^+ -\theta (X_{\tilde{\tau}_b^+}-b)} 1_{\{\tilde{\tau}_b^+ < \infty \}}\Big] &= \frac {\Phi(\lambda+{r}) - \Phi({r})} {\Phi(\lambda+{r}) +\theta}  Z^{({r}+\lambda)} (x-b, \Phi({r})) - \lambda\int_0^{x-b} e^{-\theta y}  W^{({r}+\lambda)} (x-b-y)  \diff y. 
\end{align*}
\end{lemma}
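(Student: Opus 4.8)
The plan is to reduce to a barrier at the origin, set up a one-step renewal equation at the first Poisson epoch, and solve it by exploiting the absence of positive jumps together with the $(r+\lambda)$-resolvent of $X$. By spatial homogeneity it suffices to compute $f(z) := \tilde{\E}_z[e^{-r\tilde{\tau}_0^+ - \theta X_{\tilde{\tau}_0^+}} 1_{\{\tilde{\tau}_0^+<\infty\}}]$ for $z = x-b \in \R$, since the left-hand side of the lemma equals $f(x-b)$.

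First I would treat the half-line $z\le 0$. Because $X$ is spectrally negative it creeps upward, so the continuous first passage $\sigma_0 := \inf\{t\ge 0: X_t \ge 0\}$ satisfies $X_{\sigma_0}=0$ on $\{\sigma_0<\infty\}$, and any Poisson observation above $0$ must occur after $\sigma_0$. Applying the strong Markov property at $\sigma_0$ (the post-$\sigma_0$ Poisson process is again Poisson by independence and memorylessness) and the classical identity $\tilde{\E}_z[e^{-r\sigma_0}1_{\{\sigma_0<\infty\}}]=e^{\Phi(r)z}$, I obtain $f(z)=e^{\Phi(r)z}f(0)$ for $z\le 0$. This reduction is what lets me close the renewal equation: conditioning on the first Poisson epoch $T_1\sim\mathrm{Exp}(\lambda)$, for every $z$ one has $f(z) = \tilde{\E}_z[e^{-rT_1}(e^{-\theta X_{T_1}}1_{\{X_{T_1}\ge0\}} + f(X_{T_1})1_{\{X_{T_1}<0\}})]$, and on $\{X_{T_1}<0\}$ I may substitute $f(X_{T_1}) = e^{\Phi(r)X_{T_1}}f(0)$. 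Writing $\kappa := r+\lambda$ and using $\tilde{\E}_z[e^{-rT_1}\phi(X_{T_1})] = \lambda\int_\R u^{(\kappa)}(z,w)\phi(w)\diff w$ with the resolvent density $u^{(\kappa)}(z,w)=\Phi'(\kappa)e^{-\Phi(\kappa)(w-z)} - W^{(\kappa)}(z-w)$, the equation becomes $f(z) = A(z) + B(z)f(0)$, where $A(z):=\lambda\int_0^\infty u^{(\kappa)}(z,w)e^{-\theta w}\diff w$ and $B(z):=\lambda\int_{-\infty}^0 u^{(\kappa)}(z,w)e^{\Phi(r)w}\diff w$.

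The term $A(z)$ is a routine integral giving $A(z) = \frac{\lambda\Phi'(\kappa)}{\Phi(\kappa)+\theta}e^{\Phi(\kappa)z} - \lambda\int_0^z e^{-\theta y}W^{(\kappa)}(z-y)\diff y$, and the overshoot integral appearing in the statement is already visible here. The main obstacle is $B(z)$: over $(-\infty,0)$ the two pieces of $u^{(\kappa)}$ individually diverge against $e^{\Phi(r)w}$ (as $\Phi(r)<\Phi(\kappa)$), so they cannot be split. I would circumvent this using the exponential martingale $e^{-rt+\Phi(r)X_t}$ — a martingale precisely because $\psi(\Phi(r))=r$ — to evaluate the below-barrier contribution: optional stopping at $T_1$ gives $\tilde{\E}_z[e^{-rT_1+\Phi(r)X_{T_1}}]=e^{\Phi(r)z}$, whence $B(z)=e^{\Phi(r)z}-A(z)\big|_{\theta=-\Phi(r)}$. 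Setting $z=0$ and solving $f(0)=A(0)+B(0)f(0)$ then yields the clean value $f(0)=\frac{\Phi(\kappa)-\Phi(r)}{\Phi(\kappa)+\theta}$, consistent with the half-line computation.

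Finally I would assemble $f(z)=A(z)+B(z)f(0)$. The $e^{\Phi(\kappa)z}$ contributions cancel, and the surviving terms simplify via the substitution $w=z-y$ together with the identity $e^{\Phi(r)z}+\lambda\int_0^z e^{\Phi(r)y}W^{(\kappa)}(z-y)\diff y = Z^{(\kappa)}(z,\Phi(r))$, which follows from the definition of $Z^{(r+\lambda)}(\cdot,\Phi(r))$ and $\psi(\Phi(r))=r$. This produces exactly $f(z)=\frac{\Phi(\kappa)-\Phi(r)}{\Phi(\kappa)+\theta}Z^{(\kappa)}(z,\Phi(r)) - \lambda\int_0^z e^{-\theta y}W^{(\kappa)}(z-y)\diff y$ for $z>0$, and the same expression correctly reduces to $e^{\Phi(r)z}f(0)$ for $z\le0$. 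Re-inserting $z=x-b$ and $\kappa=r+\lambda$ gives the stated formula. An essentially equivalent route is to carry the overshoot weight $e^{-\theta(\cdot)}$ verbatim through the derivation of equation (16) in \cite{AIZ}; the renewal/resolvent argument above simply makes the role of the extra factor most transparent.
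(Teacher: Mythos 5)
Your proof is correct, but it follows a genuinely different route from the paper's. The paper works entirely at or above the barrier: it quotes equation (16) of \cite{AIZ} verbatim for $x\le b$, and for $x>b$ it applies the strong Markov property at the \emph{continuous} first passage $\sigma_b^-=\inf\{t>0: X_t<b\}$, splitting according to whether $T_1$ arrives before or after $\sigma_b^-$; the two resulting pieces are evaluated with identity (3.19) of \cite{APP} (the exit identity producing $Z^{(r+\lambda)}(\cdot,\Phi(r))$) and Corollary 8.8 of \cite{K} (the resolvent killed on passing below $b$). You instead decompose in time at the first Poisson epoch $T_1$, use the \emph{unrestricted} $(r+\lambda)$-resolvent density on all of $\R$, pin down the below-barrier value up to the unknown constant $f(0)$ via upward creeping and the identity $\tilde{\E}_z[e^{-r\sigma_0}1_{\{\sigma_0<\infty\}}]=e^{\Phi(r)z}$, and recover $f(0)$ from the fixed-point equation at $z=0$. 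The one nontrivial obstacle in your route — that the two terms of the resolvent density integrated against $e^{\Phi(r)w}$ over $(-\infty,0)$ individually diverge — is handled cleanly by the martingale identity $\tilde{\E}_z[e^{-rT_1+\Phi(r)X_{T_1}}]=e^{\Phi(r)z}$, and I verified that the assembly (cancellation of the $e^{\Phi(\kappa)z}$ terms and the identity $e^{\Phi(r)z}+\lambda\int_0^z e^{\Phi(r)y}W^{(r+\lambda)}(z-y)\,{\rm d}y=Z^{(r+\lambda)}(z,\Phi(r))$) gives exactly the stated formula, including the consistency check $f(0)=\frac{\Phi(r+\lambda)-\Phi(r)}{\Phi(r+\lambda)+\theta}$ agreeing with (16) of \cite{AIZ}. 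What each approach buys: the paper's is shorter given the cited exit identities and never needs the full-line resolvent; yours is essentially self-contained (it re-derives (16) of \cite{AIZ} as a byproduct rather than assuming it), at the cost of the martingale device and the observation that creeping reduces the unknown function below the barrier to a single unknown constant.
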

\begin{proof}
Let $u(x)$ be the expectation on the left-hand side. By equation (16) of  \cite{AIZ}, for all $x \leq b$,
\begin{align} \label{u_below}
u(x) 
= \frac {\Phi(\lambda+{r}) - \Phi(r)} {\Phi(\lambda+{r}) +\theta} e^{-\Phi({r}) (b-x)}.
\end{align}
For $x > b$, by the strong Markov property and the memoryless property of the exponential random variable, with $\sigma_b^- := \inf \{ t > 0: X_t < b \}$, 
\begin{align}
u(x)
&= \tilde{\E}_x \Big[ e^{-{r}\sigma_b^-} u(X_{\sigma_b^-}) 1_{\{T_1 > \sigma_b^- \}} \Big]  + \tilde{\E}_x \Big[ e^{-{r} T_1-\theta (X_{T_1}-b)} 1_{\{T_1 < \sigma_b^- \}}\Big].  \label{u_decomposition}
\end{align}
Here, by \eqref{u_below} and identity (3.19) in \cite{APP},
\begin{align*}
\tilde{\E}_x \Big[ e^{-{r} \sigma_b^-} u(X_{\sigma_b^-}) 1_{\{T_1 > \sigma_b^-  \}}\Big] &= \frac {\Phi(\lambda+{r}) - \Phi({r})} {\Phi(\lambda+{r}) +\theta}  \tilde{\E}_x \Big[ e^{-{r} \sigma_b^-+\Phi({r}) (X_{\sigma_b^-}-b)} 1_{\{T_1 > \sigma_b^- \}}\Big]  \\
&= \frac {\Phi(\lambda+{r}) - \Phi({r})} {\Phi(\lambda+{r}) +\theta}  \tilde{\E}_x \Big[ e^{-({r}+\lambda) \sigma_b^- + \Phi({r}) (X_{\sigma_b^-}-b)} 1_{\{ \sigma_b^- < \infty \}} \Big] \\
&= \frac {\Phi(\lambda+{r}) - \Phi({r})} {\Phi(\lambda+{r}) +\theta}   \Big(Z^{({r}+\lambda)} (x-b, \Phi({r})) -  \frac {\lambda W^{({r}+\lambda)}(x-b)} { \Phi({r}+\lambda)-\Phi({r})} \Big).
\end{align*}
On the other hand, by Corollary 8.8 of \cite{K},
\begin{align*}
&\tilde{\E}_x \Big[ e^{-{r} T_1 -\theta (X_{T_1}-b)} 1_{\{T_1 < \sigma_b^- \}} \Big] = \tilde{\E}_x \Big[ \lambda \int_0^{\sigma_b^-} e^{-({r}+\lambda) t  -\theta (X_{t}-b)} \diff t  \Big] \\
&= \tilde{\E}_{x-b} \Big[ \lambda \int_0^{\sigma_0^-} e^{-({r}+\lambda) t -\theta X_{t}} \diff t  \Big] 
= \lambda \int_0^\infty e^{-\theta y} \Big( e^{-\Phi({r}+\lambda) y} W^{({r}+\lambda)} (x-b) - W^{({r}+\lambda)} (x-b-y) \Big)  \diff y \\
&= \frac {\lambda W^{({r}+\lambda)}(x-b)} {\Phi({r}+\lambda) + \theta}  - \lambda \int_0^{x-b}  e^{-\theta y} W^{({r}+\lambda)} (x-b-y)  \diff y.
\end{align*}
Substituting these in 
\eqref{u_decomposition} and after simplification, we have the claim.
\end{proof}

\subsubsection{Put case}
Define, for $x \in \R$ and $a < \log K$,
\begin{align*}
v_p^{SN}(x;a):=\mathbb{E}_{\exp(x)} \left[e^{-{r} \tau_{\exp(a)}^-}(K-S_{\tau_{\exp(a)}^-}) 1_{\{ \tau_{\exp(a)}^- < \infty \}}\right]=\tilde{\mathbb{E}}_x \left[e^{-{r} \tilde{\tau}_{a}^-}(K-e^{X_{\tilde{\tau}_{a}^-}}) 1_{\{ \tilde{\tau}_{a}^- < \infty \}} \right].
\end{align*}
Then, by Theorem \ref{theorem_barrier_optimal}, there exists $a_p^{SN} < \log K$ such that
\begin{align*}
V_p(s) =v_p^{SN} (\log s; a_p^{SN}) = \sup_{a \in (-\infty, \log K)} v_p^{SN} (\log s; a), \quad s \in \R.
\end{align*}

By \eqref{laplace_downcrossing}, we can write
\begin{align}\label{vf_2}
\begin{split}
v_p^{SN}(x;a)&=\frac{\lambda K}{\lambda+{r}}\left(Z^{({r})}(x-a)-Z^{({r})}(x-a,\Phi({r}+\lambda))\frac{{r}(\Phi({r}+\lambda)-\Phi({r}))}{\lambda \Phi({r})}\right) \\
&-\frac{\lambda e^{a}}{\lambda+{r}-\psi(1)}\left(Z^{({r})}(x-a,1)-Z^{({r})}(x-a,\Phi({r}+\lambda))\frac{\psi(1)-{r}}{\lambda}\frac{\Phi({r}+\lambda)-\Phi({r})}{1-\Phi({r})}\right),
\end{split}
\end{align}
where in particular, for $x \leq a$,
\begin{align}\label{vf_s<s*}
\begin{split}
v_p^{SN}(x;a)
&=\frac{\lambda K}{\lambda+{r}}\left(1-e^{\Phi({r}+\lambda)(x-a)}\frac{{r}(\Phi({r}+\lambda)-\Phi({r}))}{\lambda \Phi({r})}\right) \\
&-\frac{\lambda}{\lambda+{r}-\psi(1)}\left(e^{x}- e^{a (1-\Phi({r}+\lambda))} e^{\Phi({r}+\lambda)x}\frac{\psi(1)-{r}}{\lambda}\frac{\Phi({r}+\lambda)-\Phi({r})}{1-\Phi({r})}\right).
\end{split}
\end{align}
The maximizer $a_p^{SN}$ can be identified by the first order condition for \eqref{vf_s<s*}.
For $x < a$, we have
\begin{align}
\begin{split}
\frac \partial {\partial a} v_p^{SN}(x;a)
&= e^{\Phi({r}+\lambda) (x-a)} f(a; 1)
\end{split}
\end{align}
where
\begin{align}
f(a; \theta) &:= C_0 -C_1(\theta) e^{a}. \label{def_f}
\end{align}
Here, for all $\theta \in \R$ such that $\psi(\theta)$ exists,
\begin{align*}
C_0 &:=  \frac{K}{\lambda+{r}} \Phi({r}+\lambda) \frac{{r}(\Phi({r}+\lambda)-\Phi({r}))}{\Phi({r})} > 0, \\
C_1(\theta) &:=\frac{\Phi({r}+\lambda)-\theta}{\lambda+{r}-\psi(\theta)}  \frac{\psi(\theta)-{r}}{\theta-\Phi({r})} (\Phi({r}+\lambda)-\Phi({r})),
\end{align*}
where the cases $\lambda+{r} = \psi(\theta)$ and ${r} = \psi(\theta)$ are interpreted as the limiting cases.


By $\psi(0) = 0$ and the convexity of  $\psi$ on $[0, \infty)$, we have that $\psi(1) > s$ if and only if $1 > \Phi(s)$ for all $s > 0$, and therefore 
\begin{align}
\frac {\psi(1)-s} {1-\Phi(s)} > 0, \quad s > 0. \label{relation_psi_Phi}
\end{align}
Hence, $C_1(1) > 0$.
This means,
$f'(a; 1) = - C_1(1) e^a < 0$ and therefore $a \mapsto f(a; 1)$ is continuous and  strictly decreasing.  Finally,
$\lim_{a \downarrow -\infty} f(a; 1) = C_0
> 0$ and $\lim_{a \uparrow \infty} f(a; 1) = -\infty$.
Hence, there exists a unique root of $f(\cdot ; 1)=0$ and it becomes the optimal barrier $a_p^{SN}$.


Using that $f(a_p^{SN};1)=0$ in \eqref{vf_2}, simple algebra gives
\begin{align*}
v_p^{SN}(x;a_p^{SN})
&=\frac{\lambda K}{\lambda+{r}} Z^{({r})}(x-a_p^{SN}) -\frac{\lambda e^{a_p^{SN}}}{\lambda+{r}-\psi(1)} Z^{({r})}(x-a_p^{SN},1) \\
&+ \frac {Z^{({r})}(x-a_p^{SN},\Phi({r}+\lambda))} {\Phi({r}+\lambda)} \frac{\psi(1)-{r}}{\lambda+{r}-\psi(1)} \frac{\Phi({r}+\lambda)-\Phi({r})}{1-\Phi({r})} e^{a_p^{SN} }. 
\end{align*}

\subsubsection{Call case}
First, Assumption \ref{assump_call} is equivalent to $\psi(1) < {r}$, and hence (by
\eqref{relation_psi_Phi}),
\begin{align} \label{Phi_order}
\Phi({r}+\lambda) > \Phi({r})  > 1.
\end{align}

By Lemma \ref{lemma_upcrossing_time}, extended to the case $\theta =-1$ by analytic continuation by \eqref{Phi_order}, we have
\begin{align} \label{v_c_SN_x_general}
\begin{split}
v_{c}^{SN}(x; b) &:=\mathbb{E}_{\exp(x)} \left[e^{-{r} \tau_{\exp(b)}^+}(S_{\tau_{\exp(b)}^+}-K) 1_{\{ \tau_{\exp(b)}^+ < \infty \}}\right]=\tilde{\mathbb{E}}_x \left[e^{-{r} \tilde{\tau}_{b}^+}(\exp(X_{\tilde{\tau}_{b}^+})-K) 1_{\{ \tilde{\tau}_{b}^+ < \infty \}} \right] \\
&= e^b \Big(\frac {\Phi(\lambda+{r}) - \Phi({r})} {\Phi(\lambda+{r}) -1}   Z^{({r}+\lambda)} (x-b, \Phi({r})) - \lambda \int_0^{x-b} e^{ y} W^{({r}+\lambda)} (x-b-y)   \diff y \Big) \\
&- K \Big(\frac {\Phi(\lambda+{r}) - \Phi({r})} {\Phi(\lambda+{r})}  Z^{({r}+\lambda)} (x-b, \Phi({r})) - \lambda \overline{W}^{({r}+\lambda)} (x-b)
  \Big),
  \end{split}
\end{align}
where $\overline{W}^{({r}+\lambda)}(z) := \int_0^z W^{({r}+\lambda)}(y) \diff y$ for $z \in \R$.
In particular, for $x\leq b$,
\begin{align} \label{v_c_SN_x_below_b}
v_{c}^{SN}(x; b) 
&= e^{\Phi({r}) x} \Big[   \frac {\Phi(\lambda+{r}) - \Phi({r})} {\Phi(\lambda+{r}) -1} e^{-(\Phi({r})-1) b}  -K \frac {\Phi(\lambda+{r}) - \Phi({r})} {\Phi(\lambda+{r})} e^{-\Phi({r}) b}\Big].
\end{align}
Similarly to the put case, we obtain the maximizer $b_c^{SN}$ by the first order condition for \eqref{v_c_SN_x_below_b}. We have
\begin{align*}
\frac \partial {\partial b} v_{c}^{SN}(x; b) = e^{\Phi({r}) (x-b)} g(b; -1), \quad b \in \mathbb{R}, \quad x < b,
\end{align*}
where, for $\theta \neq -\Phi(\lambda+{r})$,
\begin{align}
g(b; \theta) := K \Phi(r)  \frac {\Phi(\lambda+{r}) - \Phi({r})} {\Phi(\lambda+{r})}  - (\Phi({r}) + \theta) \frac {\Phi(\lambda+{r}) - \Phi({r})} {\Phi(\lambda+{r}) + \theta} e^{b}.  \label{def_g}
\end{align}
By \eqref{Phi_order}, $b \mapsto g(b; -1)$ is continuous and monotonically strictly decreasing with $\lim_{b \downarrow -\infty} g(b; -1) > 0$ and $\lim_{b \uparrow \infty} g(b; -1) = -\infty$. Hence, its unique root becomes the optimal barrier $b^{SN}_c$.

Applying that $g(b_c^{SN};-1)=0$ in \eqref{v_c_SN_x_general},  we have 
	\begin{align*}
		v_{c}^{SN}(x; b^{SN}_c) &=e^{b^{SN}_c}  \Big( \frac 1 {\Phi({r}) } \frac {\Phi(\lambda+{r}) - \Phi({r})} {\Phi(\lambda+{r}) - 1}  Z^{({r}+\lambda)} (x-b^{SN}_c, \Phi({r})) - \lambda \int_0^{x-b^{SN}_c} e^{ y} W^{({r}+\lambda)} (x-b^{SN}_c-y)   \diff y \Big) \\
		&+ K  \lambda \overline{W}^{({r}+\lambda)} (x-b^{SN}_c).
	\end{align*}
\subsection{Spectrally positive \lev case}
We now suppose that $X$ is a spectrally positive  \lev process. 
Then, its dual $Y = -X$ becomes a spectrally negative \lev process.  Let $\hat{\p}_y$ be the probability where $Y_0 = y$ and  $\hat{\tau}$ be the crossing times  for $Y$ defined analogously to \eqref{crossing_time_X}. 
Let $\psi$, $\Phi$, and $W$ denote the Laplace exponent, its inverse, and the scale function, respectively, for the process $Y$.

\subsubsection{Put case}
By Lemma \ref{lemma_upcrossing_time}, for $a < \log(K)$ and $x \in \R$,
\begin{align} \label{v_p_SP}
\begin{split}
v_p^{SP}(x;a) 
&:= \mathbb{E}_{\exp(x)} \left[e^{-{r} \tau_{\exp(a)}^-}(K-S_{\tau_{\exp(a)}^-}) 1_{\{ \tau_{\exp(a)}^- < \infty \}}\right]
=\hat{\mathbb{E}}_{-x} \left[e^{-{r}\hat{\tau}_{-a}^+}(K-\exp (- Y_{\hat{\tau}_{-a}^+})) 1_{\{ \hat{\tau}_{-a}^+ < \infty \}} \right] \\
&=K \Big(\frac {\Phi(\lambda+{r}) - \Phi({r})} {\Phi(\lambda+{r})}  Z^{({r}+\lambda)} (a-x, \Phi({r})) - \lambda \overline{W}^{({r}+\lambda)} (a-x)  \Big) \\
&-  e^a \Big(\frac {\Phi(\lambda+{r}) - \Phi({r})} {\Phi(\lambda+{r}) +1}  Z^{({r}+\lambda)} (a-x, \Phi({r})) - \lambda \int_0^{a-x} e^{-y} W^{({r}+\lambda)} (a-x-y)   \diff y \Big). 
\end{split}
\end{align}
In particular, for $x \geq a$,
\begin{align*}
v_{p}^{SP}(x; a) = e^{-\Phi({r}) x} \Big[ K \frac {\Phi(\lambda+{r}) - \Phi({r})} {\Phi(\lambda+{r})} e^{\Phi({r}) a} - \frac {\Phi(\lambda+{r}) - \Phi({r})} {\Phi(\lambda+{r}) +1} e^{(\Phi({r})+1) a}  \Big].
\end{align*}
Differentiating this, we have (using $g$ defined in \eqref{def_g})
\begin{align*}
\frac \partial {\partial a} v_{p}^{SP}(x;a) = e^{-\Phi({r}) (x-a)} g(a;1), \quad x > a.
\end{align*}
Because $\Phi(\lambda+{r}) > \Phi({r}) > 0$,  the function $g(\cdot; 1)$ is continuous and strictly decreasing with $\lim_{a \downarrow -\infty} g(a; 1)> 0$ and $\lim_{a \uparrow \infty}g(a; 1)= -\infty$. Hence, its unique root becomes the optimal barrier $a_p^{SP}$. 

	Applying that $g(a_p^{SP};1)=0$ in \eqref{v_p_SP}, we have 
\begin{align*}
	v_{p}^{SP}(x; a_p^{SP})
	&=e^{a_p^{SP}} \Big( \frac 1 {\Phi({r}) } \frac {\Phi(\lambda+{r}) - \Phi({r})} {\Phi(\lambda+{r}) +1}  Z^{({r}+\lambda)} (a_p^{SP}-x, \Phi({r})) + \lambda\int_0^{a_p^{SP}-x} e^{-y}  W^{({r}+\lambda)} (a_p^{SP}-x-y)  \diff y   \Big)\\
	&- K \lambda \overline{W}^{({r}+\lambda)} (a_p^{SP}-x).
\end{align*}	
\subsubsection{Call case}
Under Assumption \ref{assump_call}, the domain of the Laplace exponent $\psi$ can be extended to $[-1, \infty)$ and
\begin{align}
\log \E S_1 = \log \E [e^{-Y_1}] = \psi(-1) < {r}. \label{cond_psi_SP}
\end{align}
For $b > \log (K)$ and $x \in \R$, define
\begin{align*}
v_c^{SP}(x; b) :=
\mathbb{E}_{\exp(x)} \left[e^{-{r} \tau_{\exp(b)}^+}(S_{\tau_{\exp(b)}^+}-K) 1_{\{ \tau_{\exp(b)}^+ < \infty \}}\right]
=\hat{\mathbb{E}}_{-x} \left[e^{-{r} \hat{\tau}_{-b}^-}(\exp (- Y_{\hat{\tau}_{-b}^-}) -K) 1_{\{ \hat{\tau}_{-b}^- < \infty \}} \right].
\end{align*}
%
%
	By \eqref{laplace_downcrossing} (with $X$ replaced by $Y$) which holds for $\theta = -1$ by analytic continuation using \eqref{cond_psi_SP}, we can write
\begin{align} \label{v_c_SP_def}
\begin{split}
v_c^{SP}(x;b)&= \frac{\lambda e^{b}}{\lambda+{r}-\psi(-1)}\left(Z^{({r})}(b-x,-1)-Z^{({r})}(b-x,\Phi({r}+\lambda))\frac{\psi(-1)-{r}}{\lambda}\frac{\Phi({r}+\lambda)-\Phi({r})}{-1-\Phi({r})}\right) \\
&-\frac{\lambda K}{\lambda+{r}}\left(Z^{({r})}(b-x)-Z^{({r})}(b-x,\Phi({r}+\lambda))\frac{{r}(\Phi({r}+\lambda)-\Phi({r}))}{ \lambda \Phi({r})}\right),
\end{split}
\end{align}
where in particular, for $x \geq b$,
\begin{align*}
\begin{split}
v_c^{SP}(x; b)
&=\frac{\lambda}{\lambda+{r}-\psi(-1)}\left(e^{x}- e^{b (1+\Phi({r}+\lambda))} e^{-\Phi({r}+\lambda)x}\frac{\psi(-1)-{r}}{\lambda}\frac{\Phi({r}+\lambda)-\Phi({r})}{-1-\Phi({r})}\right) \\
&-\frac{\lambda K}{\lambda+{r}}\left(1-e^{\Phi({r}+\lambda)(b-x)}\frac{{r}(\Phi({r}+\lambda)-\Phi({r}))}{\lambda\Phi({r})}\right).
\end{split}
\end{align*}
Differentiating this and using $f$ as in \eqref{def_f},
\begin{align*}
\begin{split}
\frac \partial {\partial b} v_c^{SP}(x;b)
&= e^{\Phi({r}+\lambda) (b-x)} f(b;-1), \quad x > b,
\end{split}
\end{align*}

Here, 
\begin{align*}
C_1(-1) &=\frac{\Phi({r}+\lambda)+1}{\lambda+{r}-\psi(-1)}  \frac{{r}-\psi(-1)}{1+\Phi({r})} (\Phi({r}+\lambda)-\Phi({r})),
\end{align*}
which is strictly positive by \eqref{cond_psi_SP}. Hence, as in the case of $f(\cdot; 1)$, we can obtain $b^{SP}_c$ as the unique root of $f(\cdot; -1) = 0$.
	Applying that $f(b_c^{SP};-1)=0$ in \eqref{v_c_SP_def},  we have 
\begin{align*}
v_c^{SP}(x;b_c^{SP})
&=-\frac{\lambda K}{\lambda+{r}} Z^{({r})}(b_c^{SP}-x) +\frac{\lambda e^{b_c^{SP}}}{\lambda+{r}-\psi(-1)} Z^{({r})}(b_c^{SP}-x,-1) \\
&+\frac {Z^{({r})}(b_c^{SP}-x,\Phi({r}+\lambda))} {\Phi({r}+\lambda)} \frac{\psi(-1)-{r}}{\lambda+{r}-\psi(-1)}  \frac{\Phi({r}+\lambda)-\Phi({r})}{-1-\Phi({r})}e^{b_c^{SP}}. \end{align*}

\section{Numerical results} \label{section_numerics}

We conclude the paper with numerical examples, using spectrally negative and positive \lev processes consisting of a Brownian motion and  i.i.d.\ exponential-size jumps.    These are special cases of the double exponential jump diffusion of  Kou \cite{Kou}. The scale functions $W^{(r)}$ and $W^{(r+\lambda)}$ can be obtained explicitly as in \cite{Egami_Yamazaki_2010_2, KKR}.

For the spectrally negative case, we assume
\begin{equation}
 X_t - X_0= c t+ 0.2 B_t - \sum_{n=1}^{N_t} Z_n, \quad 0\le t <\infty, \label{X_phase_type}
\end{equation}
where $B=( B_t; t\ge 0)$ is a standard Brownian motion, $N=(N_t; t\ge 0 )$ is a Poisson process with arrival rate $1$, and  $Z = ( Z_n; n = 1,2,\ldots )$ is an i.i.d.\ sequence of exponential random variables with parameter $2$. The processes $B$, $N$, and $Z$ are assumed mutually independent.  For the spectrally positive case, we set $X$ to be the negative of \eqref{X_phase_type}.  The value $c$ is chosen so that $e^{-({r}-\delta)} S_t$ is a martingale for $\delta = 0.03$.  By this, Assumption \ref{assump_call} is satisfied. For other parameters, we set $K=50$, ${r} = 0.05$, and $\lambda=1$, unless stated otherwise.

Figures \ref{figure_put} and \ref{figure_call} show, respectively, the value functions $V_p(s)$ and $V_c(s)$ in comparison to the expected values under other barrier strategies. It can be confirmed that the obtained value functions dominate suboptimal value functions uniformly in the starting value $s$.  Also, it is observed that our choice of the optimal barrier is such that the value function becomes convex and also smooth at the barrier. 


In Figures \ref{figure_put_r} and \ref{figure_call_r}, we show $V_p(s)$ and $V_c(s)$ for various choices of the Poisson arrival rate $\lambda$ along with the classical case as in Remark \ref{remark_classical_case}. The value functions are increasing in $\lambda$ uniformly in $s$ and converge to the classical ones as $\lambda \uparrow \infty$.  On the other hand, as $\lambda \downarrow 0$, they decrease to zero. The optimal barriers converge to the ones in the classical case as $\lambda \uparrow \infty$ and to the strike price $K$ as $\lambda \downarrow 0$.

\begin{figure}[htbp]
\begin{center}
\begin{minipage}{1.0\textwidth}
\centering
\begin{tabular}{cc}
 \includegraphics[scale=0.32]{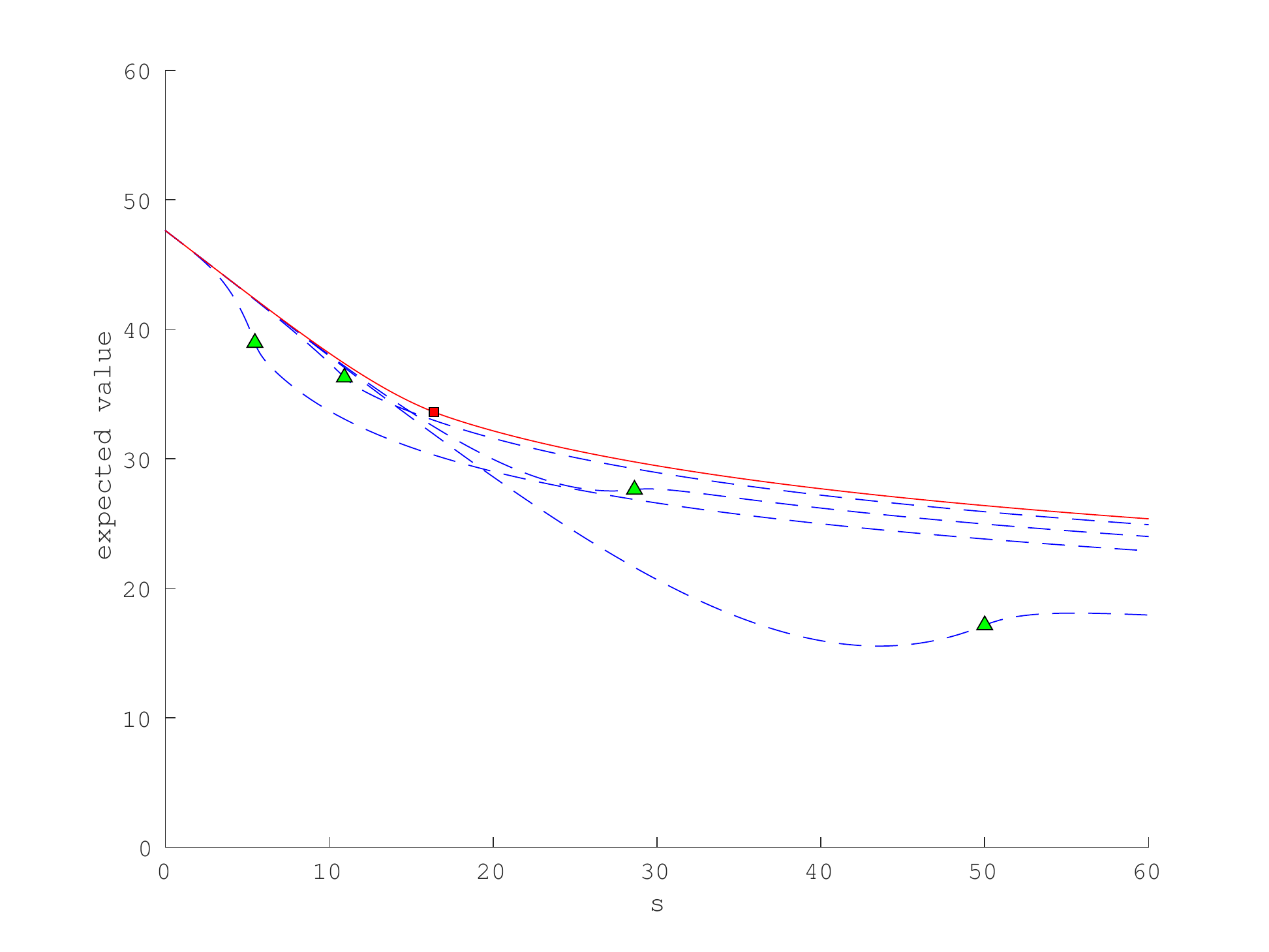} & \includegraphics[scale=0.32]{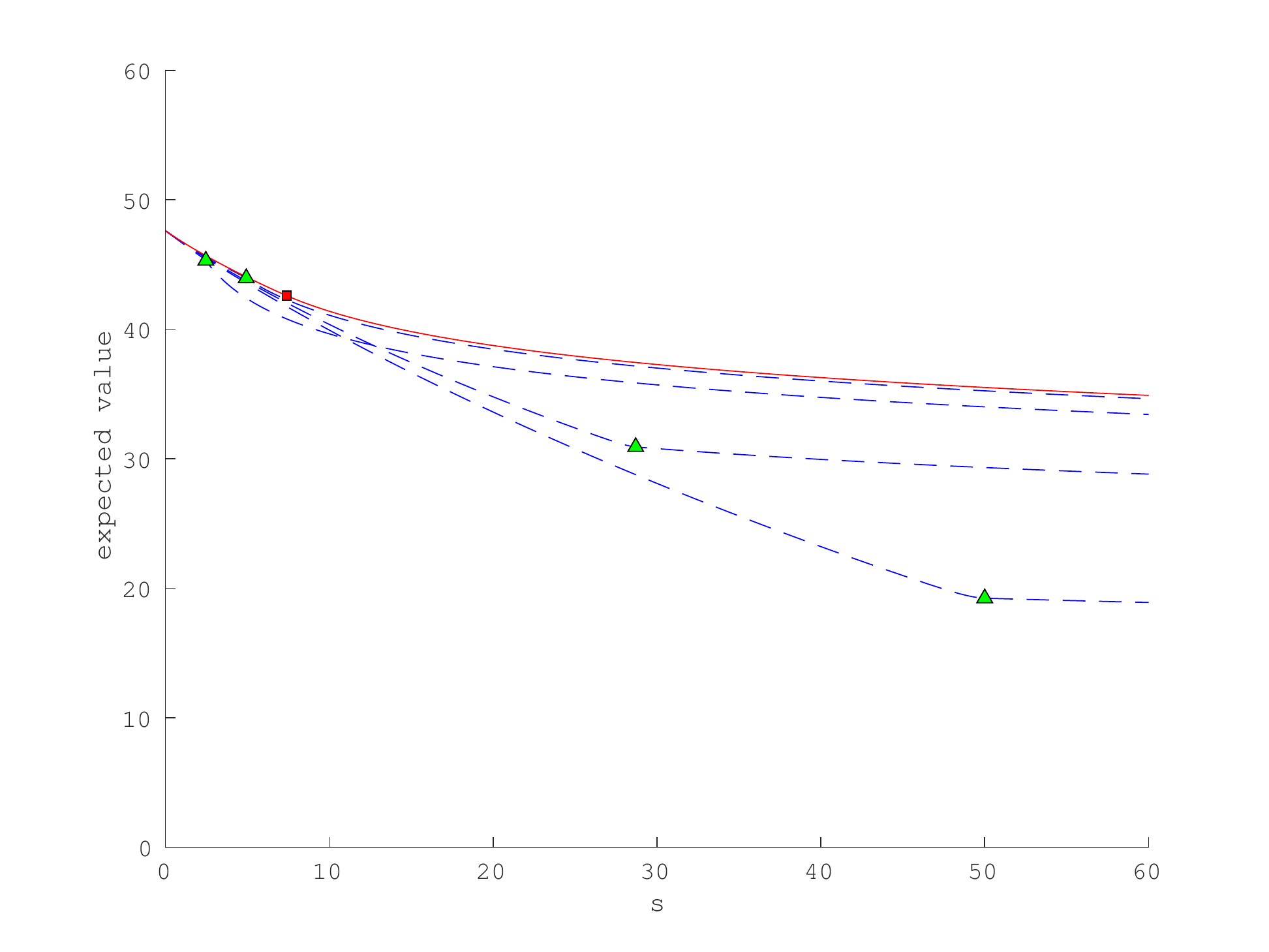}  \\
 spectrally negative case & spectrally positive case
  \end{tabular}
\end{minipage}
\caption{\footnotesize The results for the put case. (Left) $V_p^{SN}(s) = v_p^{SN}(\log s; a_p^{SN})$ along with $v_p^{SN}(\log s; a)$ for $\exp(a) = \exp(a_p^{SN})/3, 2 \exp(a_p^{SN})/3, (\exp(a_p^{SN}) + K)/2, K$. (Right) $V_p^{SP}(s) = v_p^{SP}(\log s; a_p^{SP})$ along with $v_p^{SP}(\log s; a)$ for $\exp(a) = \exp(a_p^{SP})/3, 2 \exp(a_p^{SP})/3, (\exp(a_p^{SP}) + K)/2, K$.
The values at $a_p^{SN}$ and $a_p^{SP}$ are indicated by  squares. Those at the suboptimal barriers $a$  are indicated by triangles.} 
\label{figure_put}
\end{center}
\end{figure}
\begin{figure}[htbp]
\begin{center}
\begin{minipage}{1.0\textwidth}
\centering
\begin{tabular}{cc}
 \includegraphics[scale=0.32]{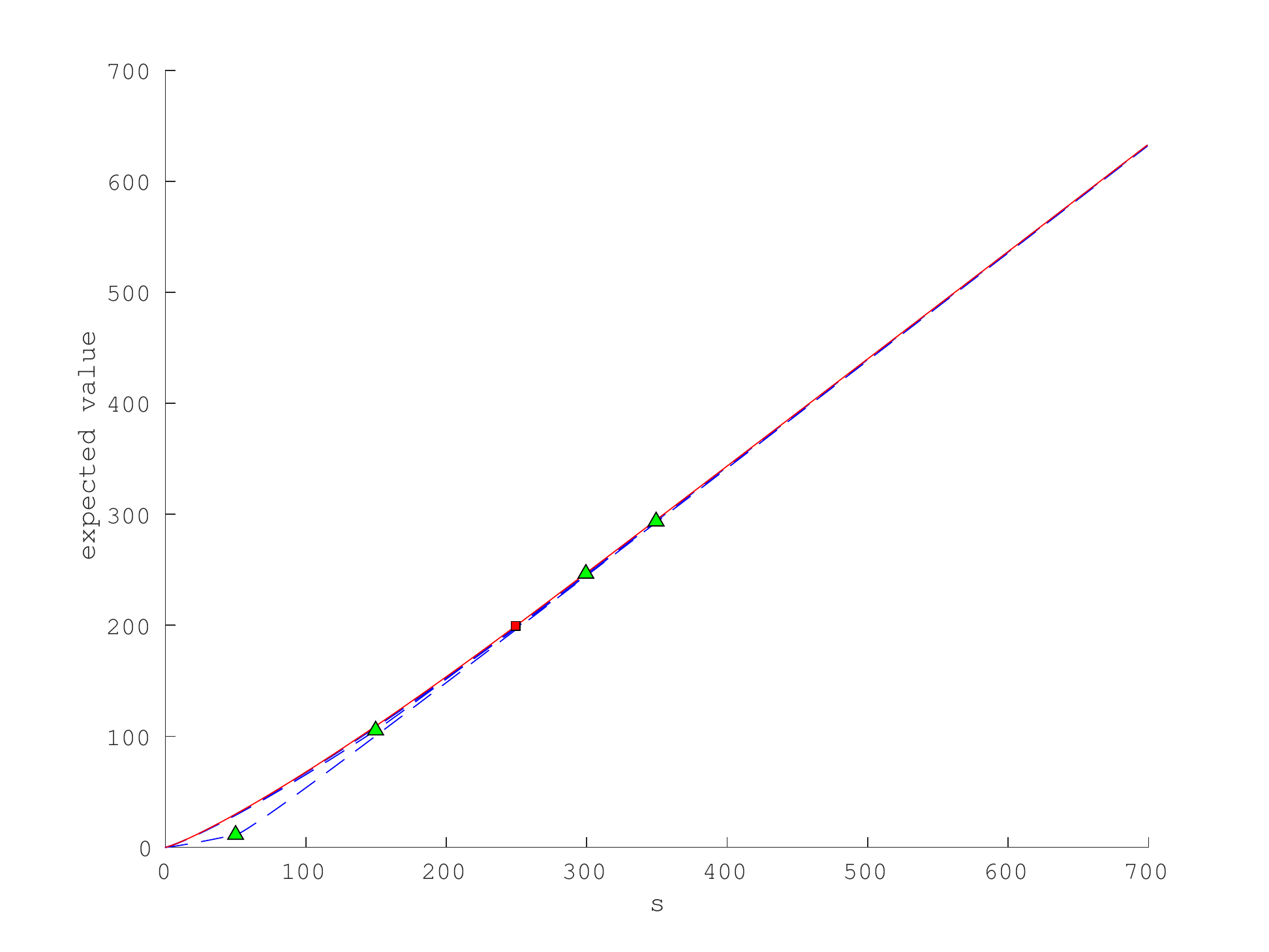} & \includegraphics[scale=0.32]{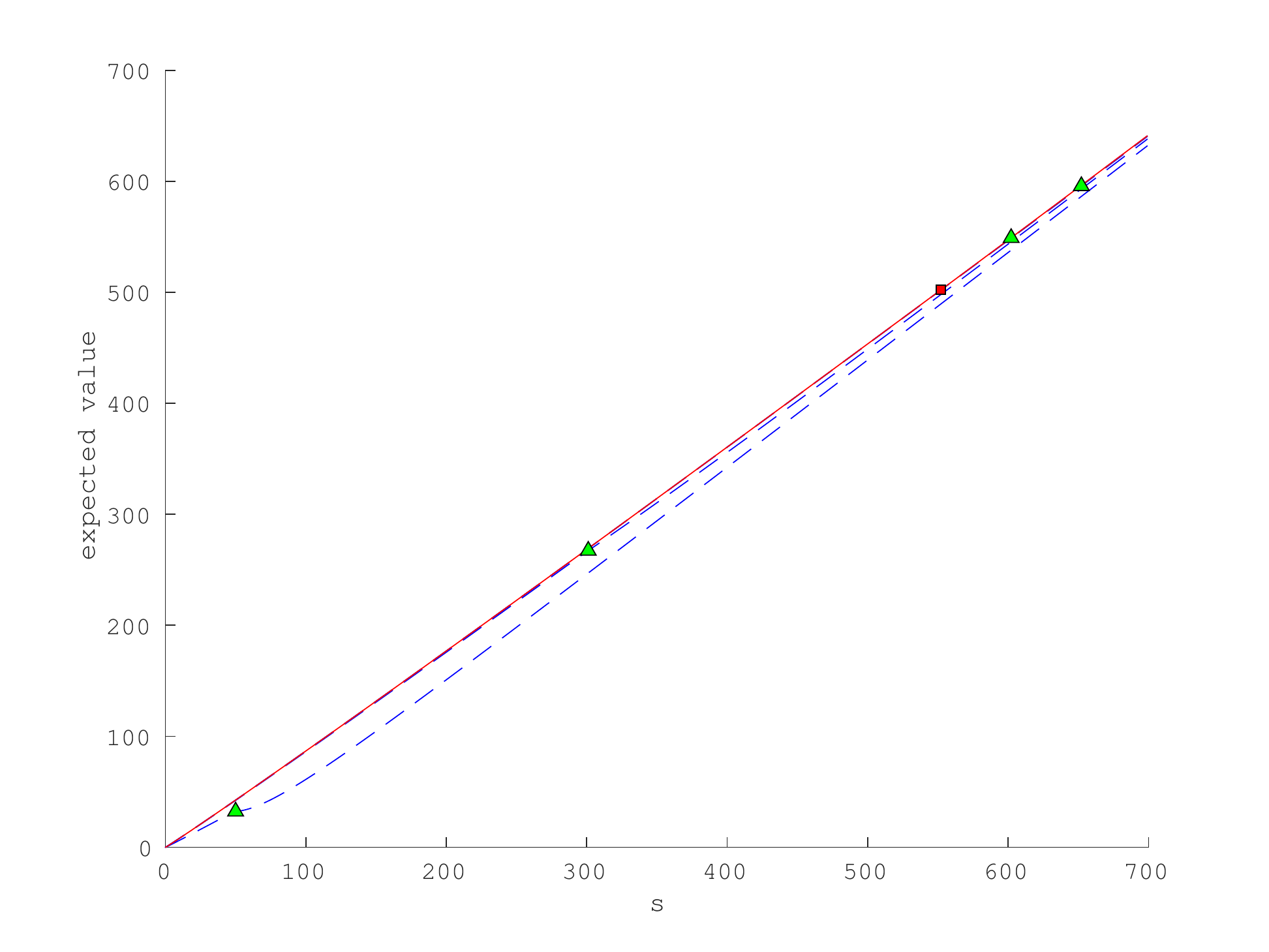}  \\
  spectrally negative case & spectrally positive case
 \end{tabular}
\end{minipage}
\caption{\footnotesize The results for the call case. (Left) $V_c^{SN}(s) = v_c^{SN}(\log s; b_c^{SN})$ along with $v_c^{SN}(\log s; b)$ for $\exp(b) = K, (\exp(b_c^{SN})+K)/2, \exp(b_c^{SN})+50, \exp(b_c^{SN})+100$. (Right) $V_c^{SP}(s) = v_c^{SP}(\log s; b_c^{SP})$ along with $v_c^{SP}(\log s, b)$ for $\exp(b) = K, (\exp(b_c^{SP})+K)/2, \exp(b_c^{SP})+50, \exp(b_c^{SP})+100$. 
The values at $b_c^{SN}$ and $b_c^{SP}$ are indicated by squares. Those at the suboptimal barriers $b$  are indicated by triangles.} 
\label{figure_call}
\end{center}
\end{figure}

\begin{figure}[htbp]
\begin{center}
\begin{minipage}{1.0\textwidth}
\centering
\begin{tabular}{ccc}
 \includegraphics[scale=0.32]{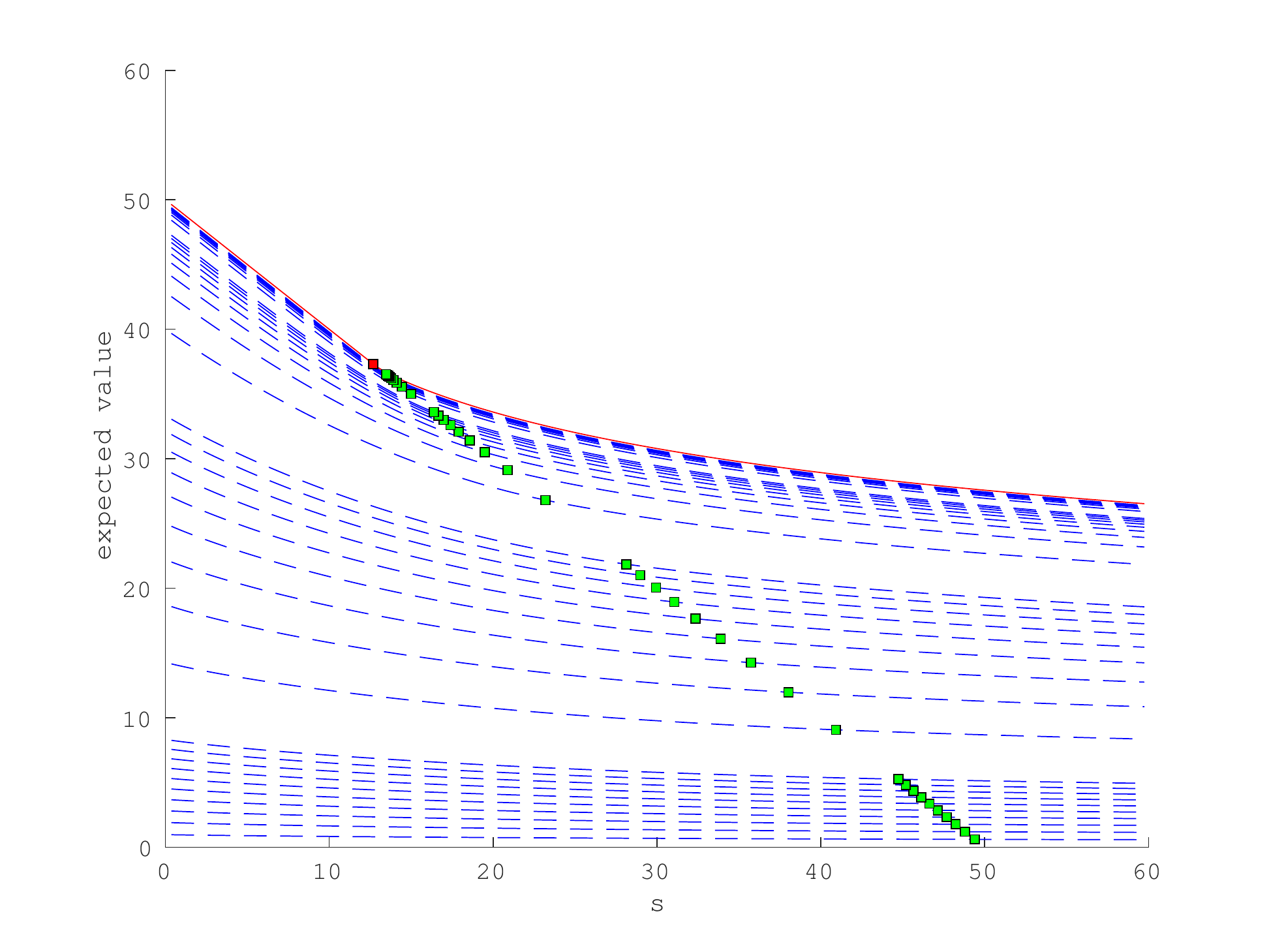} & \includegraphics[scale=0.32]{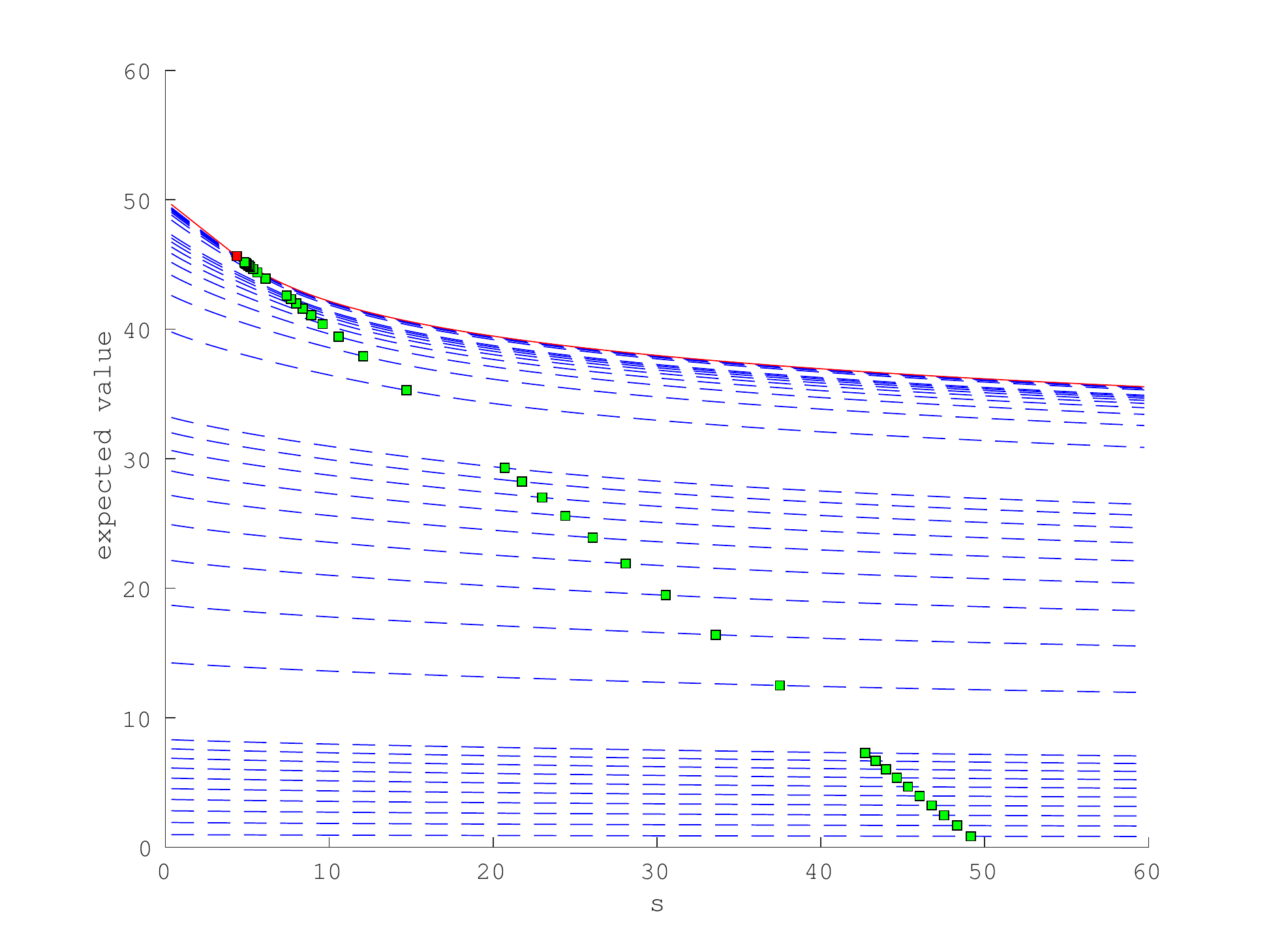}  \\
 spectrally negative case & spectrally positive case
 \end{tabular}
\end{minipage}
\caption{\footnotesize Plots of $V_p$ (dotted) for $\lambda = 0.001$, $0.002$, $\ldots$, $0.009$, $0.01$, $0.02$, $\ldots$, $0.09$, $0.1$,$0.2$, $\ldots$, $0.9$, $1$, $2$, $\ldots$, $10$ along with $V_{p, \infty}$ (solid).  The points at the optimal barriers are indicated by squares. 
} \label{figure_put_r}
\end{center}
\end{figure}
\begin{figure}[htbp]
\begin{center}
\begin{minipage}{1.0\textwidth}
\centering
\begin{tabular}{ccc}
 \includegraphics[scale=0.32]{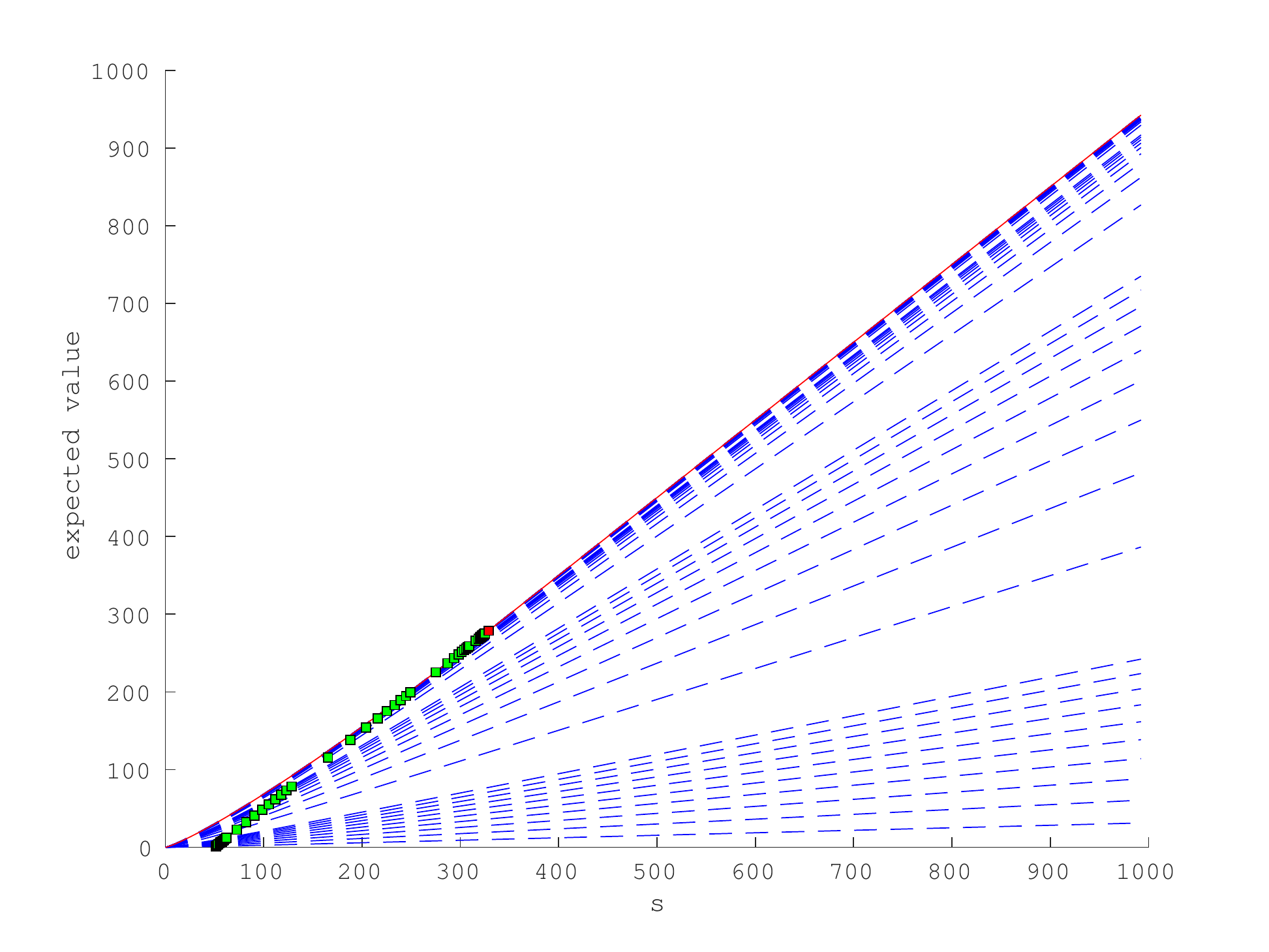} & \includegraphics[scale=0.32]{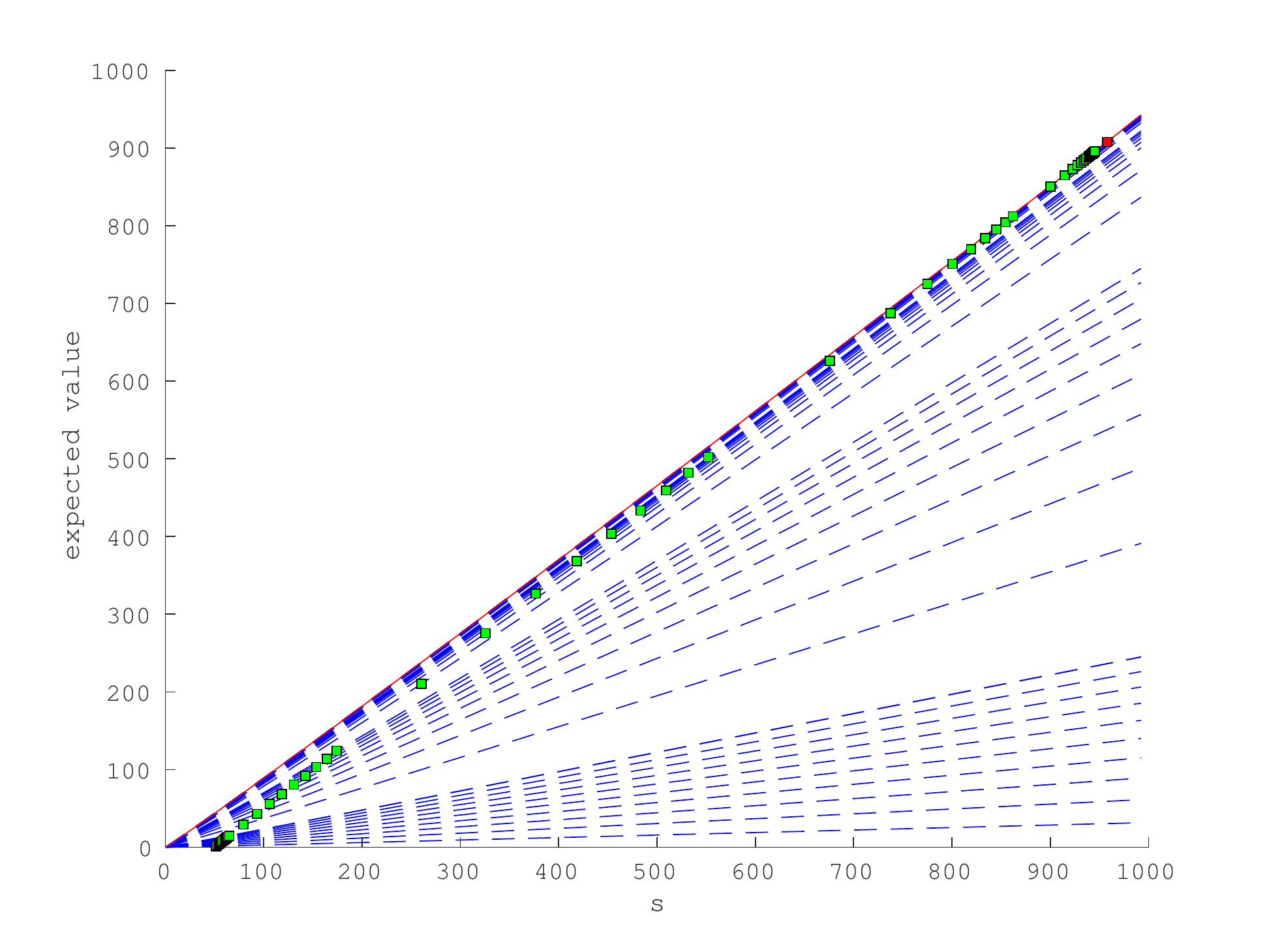}  \\
 spectrally negative case & spectrally positive case
 \end{tabular}
\end{minipage}
\caption{\footnotesize Plots of $V_c$ (dotted) for $\lambda=0.001$, $0.002$, $\ldots$, $0.009$, $0.01$, $0.02$, $\ldots$, $0.09$, $0.1$, $0.2$, $\ldots$, $0.9$, $1$, $2$, $\ldots$, $9$, $10$, $20$, $\ldots$, $190$, $200$ along with $V_{c, \infty}$ (solid).  The points at the optimal barriers are indicated by squares. 
} \label{figure_call_r}
\end{center}
\end{figure}

\end{document}